\newcommand{\bigdot}{\boldsymbol{\cdot}}
\newcommand\precdot{\mathrel{\ooalign{$\prec$\cr
  \hidewidth\raise0.001ex\hbox{$\cdot\mkern0.6mu$}\cr}}}
\newtheorem{theorem}{Theorem}[section]
\newtheorem{def-prop}[theorem]{Definition-Proposition}
\newtheorem{proposition}[theorem]{Proposition}
\newtheorem{lemma}[theorem]{Lemma}
\newtheorem{corollary}[theorem]{Corollary}
\theoremstyle{definition}
\newtheorem{ex}[theorem]{Example}
\newtheorem{definition}[theorem]{Definition}
\theoremstyle{remark}
\title{Levi-spherical Schubert varieties}
\author{Yibo Gao}
\address{Beijing International Center for Mathematical Research, Peking University, Beijing, China}
\email{gaoyibo@bicmr.pku.edu.cn}
\author{Reuven Hodges}
\address{Department of Mathematics, University of Kansas, Lawrence, KS 66045, USA}
\email{rmhodges@ku.edu}
\author{Alexander Yong}
\address{Department of Mathematics, U.~Illinois at Urbana-Champaign, Urbana, IL 61801, USA}
\email{ayong@illinois.edu}
\date{December 5, 2023}
\begin{document}
\pagestyle{plain}
\begin{abstract}
We prove a short, root-system uniform, combinatorial classification of Levi-spherical Schubert varieties for any generalized flag variety $G/B$ of finite Lie type.
We apply this to the study of multiplicity-free decompositions of a Demazure module into irreducible representations of a Levi subgroup.
\end{abstract}
\maketitle

\section{Introduction}

\subsection{History and motivation} In his essay \cite{Howe} on representation theory and invariant theory, R. Howe discusses the significance of multiplicity-free actions as an
organizing principle for the subject. Classical invariant theory focuses on actions of a reductive group $G$ on symmetric algebras, which is to say,
coordinate rings of vector spaces. Now one also considers $G$-actions on varieties $X$ and their coordinate rings ${\mathbb C}[X]$. 
Such an action is multiplicity-free if ${\mathbb C}[X]$ decomposes, as a $G$-module, into irreducible $G$-modules each with multiplicity one.
An important example is when $X$ is the \emph{base affine space} of a complex, semisimple algebraic group $G$ \cite{BGG75}; in this case the coordinate ring is a multiplicity-free direct sum
of the irreducible representations of $G$. Lustzig's theory of dual canonical bases \cite{L90} provides a basis for it.
In the early 2000s, understanding this basis was a motivation for S.~Fomin and A.~Zelevinsky's theory of Cluster algebras \cite{FZ02}.

The notion of multiplicity-free actions is closely connected to that of \emph{spherical varieties}. Let $G$ be a connected, complex, reductive algebraic group; we say that a variety $X$
is a $G$-variety if $X$ is equipped with an action of $G$ that is a morphism of varieties. A spherical variety is a normal $G$-variety where a Borel subgroup of $G$ has an open, and therefore dense, orbit. 
An normal, affine $G$-variety $X$ is spherical if and only if ${\mathbb C}[X]$ decomposes into irreducible $G$-modules each with multiplicity
one \cite{VK78}. If $X$ is instead a normal, projective $G$-variety then one can still recover one direction of this implication. That is, if the induced $G$-action on the homogeneous coordinate ring of $X$ is multiplicity-free, then $X$ is $G$-spherical \cite[Proposition 4.0.1]{Hodges.Lakshmibai}. 

Spherical varieties possess numerous nice properties. For instance, projective spherical varieties are Mori Dream Spaces.
Moreover, Luna-Vust theory describes all the birational models of a spherical variety in terms of colored fans; these fans generalize the notion of
fans used to classify toric varieties (which are themselves spherical varieties). N.~Perrin's excellent survey covers additional background on spherical
varieties \cite{Perrin}. 

It is an open problem to classify all spherical actions on products of flag varieties. This is solved in the case of Levi subgroups; we point to the work of 
P.~Littelmann \cite{Littel}, P.~Magyar--J.~Weyman--A.~Zelevinsky \cite{MWZ, MWZ2}, J.~Stembridge \cite{STEM,Stem:Weyl}, R.~Avdeev--A.~Petukhov \cite{AP14, AP20}.
Connecting back to the representation-theoretic perspective of \cite{Howe}, in
\cite{STEM,Stem:Weyl}, J.~Stembridge relates this classification problem to the multiplicity-freeness of restrictions of \emph{Weyl modules} \cite[Lecture~6]{Fulton.Harris}. Indeed, the homogeneous
coordinate ring of a single flag variety is a multiplicity-free sum of spaces of global sections on the variety with respect to line bundles associated to each dominant integral weight. By the Borel-Weil-Bott theorem, these  spaces
are isomorphic to the irreducible representations of $G$. This is the central object of interest in \emph{Standard Monomial Theory} \cite{LR08} and is closely related to
the coordinate ring of base affine space mentioned above. As remarked above a product of flag varieties is $G$-spherical if its homogeneous coordinate ring is multiplicity-free
as an $G$-module.

This paper solves a related problem. We classify all \emph{Levi-spherical} Schubert varieties in a single flag variety; that is, Schubert varieties that are spherical for the action of a Levi
subgroup. Here, the relevant ring is the homogeneous coordinate ring of a Schubert variety and the attendant 
representation theory is that of \emph{Demazure modules} \cite{Demazure}, which are Borel subgroup representations. Critically for this paper, they are also Levi subgroup
representations. Multiplicity-freeness in this setting refers to the decomposition of these modules into irreducible Levi subgroup representations. 
This study was initiated in \cite{Hodges.Yong1} and the authors solved the problem for the $GL_n$ case in \cite{GHY}. 
In \cite{GHY2} we conjectured an answer for all finite rank Lie types; this paper proves that conjecture. During the completion of this article, we learned that M.~Can-P.~Saha \cite{CanSaha} independently
proved the conjecture.

\subsection{Background} Throughout, let $G$ be a complex, connected, reductive algebraic group and let $B\leq G$ be a choice of Borel subgroup along
with a maximal torus $T$ contained in $B$. The \emph{Weyl group} is 
$W:= N_G(T)/T$, where $N_G(T)$ is the normalizer of $T$ in $G$. 
The orbits of the homogeneous space $G/B$ under the action of $B$ by left translations are the \emph{Schubert cells} $X_w^{\circ}, w\in W$. Their Zariski closures 
\[X_w:=\overline{X_w^{\circ}}\]
 are the \emph{Schubert varieties}. It is relevant below that these varieties are normal \cite{DeCon-Lak,
 RR}.

A \emph{parabolic subgroup} of $G$ is a closed subgroup $B\subset P\subsetneq G$ such that $G/P$ is a projective variety. Each such $P$ admits a \emph{Levi decomposition}  
\[P=L\ltimes R_u(P)\]
where $L$ is a reductive subgroup called a \emph{Levi subgroup} of $P$ and $R_u(P)$ is the unipotent radical. One parabolic subgroup is
$P_w:={\rm stab}_G(X_w)$. Any of the parabolic subgroups $B\subseteq Q\subseteq P_w$ act on $X_w$. 

Let $L_Q$ be a Levi subgroup of $Q$. A variety $X$ is \emph{$H$-spherical} for the action of a complex reductive algebraic group $H$ if it is
normal and contains an open, and therefore dense, orbit of a Borel subgroup of~$H$. Our reference for spherical varieties is \cite{Perrin}; toric varieties are examples of spherical varieties.
\begin{definition}[{\cite[Definition~1.8]{Hodges.Yong1}}]
Let $B\subseteq Q\subseteq P_w$ be a parabolic subgroup of $G$. 
$X_w\subseteq G/B$ is \emph{$L_Q$-spherical} if has a dense, open orbit of a Borel subgroup of
$L_Q$ under left-translations. 
\end{definition}

\subsection{The main result} We give a root-system uniform combinatorial criterion to decide if $X_w$ is $L_Q$-spherical. Let
$\Phi:=\Phi({\mathfrak g},T)$ be the root system  of weights for the adjoint action of $T$ on the Lie algebra ${\mathfrak g}$ of $G$. It has a decomposition $\Phi=\Phi^+\cup \Phi^-$ into positive and negative roots.
Let $\Delta\subset \Phi^+$ be the base of simple roots.
 The parabolic subgroups $Q=P_I\supset B$ are in bijection with subsets $I$ of $\Delta$; let $L_I:=L_Q$. The set of \emph{left descents} of $w$ is 
 \[{\mathcal D}_L(w)=\{\beta\in \Delta: \ell(s_{\beta}w)<\ell(w)\},\]
where $\ell(w)=\dim X_w$ is the \emph{Coxeter length} of $w$. Under the bijection, $P_w=P_{{\mathcal D}_L(w)}$, and $B\subset Q\subseteq P_w=P_{{\mathcal D}_L(w)}$ satisfy $Q=P_I$ for some 
$I\subseteq {\mathcal D}_L(w)$. 

For $I\subset \Delta$, a \emph{parabolic subgroup} $W_I\subseteq W$ is the subgroup generated by 
$S_I:=\{s_{\beta}:\beta \in I\}$. A \emph{standard Coxeter element} $c\in W_I$ is any product of the elements of 
$S_I$ listed in some order. Let $w_0(I)$ be the longest element of $W_I$.  The following definition was given in type $A$ in \cite[Definition~1.1]{GHY} and in general type in \cite[Section~4]{GHY2}:
\begin{definition}
Let $w\in W$ and $I\subseteq {\mathcal D}_L(w)$ be fixed. Then $w$ is $I$-spherical if $w_0(I)w$ is a standard Coxeter element for $W_{J}$ where $J\subseteq \Delta$.
\end{definition}
We first note that if $I\subseteq {\mathcal D}_L(w)$, then the left inversion set ${\mathcal I}(w)$, defined in Section~\ref{sec:2},
 contains all the positive roots in the root subsystem generated by $I$, and thus $w=w_0(I)d$ is a length-additive expression for some $d\in W$,
  by Proposition 3.1.3 in \cite{Bjorner.Brenti}. 

\begin{theorem}
\label{conj:main}
Fix $w\in W$ and $I\subseteq {\mathcal D}_L(w)$. $X_w$ is $L_I$-spherical if and only if $w$ is $I$-spherical.
\end{theorem}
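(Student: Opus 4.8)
The plan is to translate $L_I$-sphericality into the vanishing of the complexity $c_{L_I}(X_w)$, the codimension of a generic orbit of a Borel $B_{L_I}\leq L_I$, and then to compute this complexity from the combinatorics of the left inversion set $\mathcal{I}(w)$. First I would work on the dense Schubert cell $X_w^{\circ}=BwB/B\cong \mathbb{A}^{\ell(w)}$, which contains the $T$-fixed point $e_w:=wB/B$ and is preserved by left translation by $T$. Under this action the cell is $T$-equivariantly a product of root subgroups, so its coordinate weights are exactly the elements of $\mathcal{I}(w)$. Taking $B_{L_I}=T\cdot U_{\Phi_I^+}$, where $\Phi_I^+$ is the set of positive roots in the subsystem $\Phi_I$ generated by $I$, I would study the left $B_{L_I}$-orbit of a generic point of $X_w^{\circ}$ and compute its dimension; $X_w$ is $L_I$-spherical precisely when this dimension equals $\ell(w)$.

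The backbone of the argument is a dimension count. Since $I\subseteq \mathcal{D}_L(w)$, the remark preceding the theorem gives $\Phi_I^+\subseteq \mathcal{I}(w)$ and the length-additive factorization $w=w_0(I)\,d$ with $d=w_0(I)w$; in particular $\ell(d)=\ell(w)-\ell(w_0(I))=\ell(w)-|\Phi_I^+|$. The $|\Phi_I^+|$ unipotent directions of $B_{L_I}$ act along the coordinate directions indexed by $\Phi_I^+\subseteq \mathcal{I}(w)$, while the torus $T$ can move a generic point in at most $\dim\operatorname{span}(\mathcal{I}(w)\setminus \Phi_I^+)$ further independent directions. Organizing $\mathcal{I}(w)$ into its $\Phi_I$-root strings (the orbits seen by the Levi) reduces the count to the leftover factor $d$: an orbit of full dimension $\ell(w)$ exists if and only if these $\ell(d)$ residual weight directions are linearly independent and are reached without repetition, which is exactly the condition that $d$ be a standard Coxeter element of $W_J$ for $J$ its support, i.e. that $w$ be $I$-spherical.

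For the direction \emph{$I$-spherical $\Rightarrow$ $L_I$-spherical} I would make this construction explicit: when $d$ is a Coxeter element of $W_J$, I would exhibit a point of $X_w^{\circ}$ whose $U_{\Phi_I^+}$-orbit sweeps out the $\Phi_I^+$-coordinates and whose $T$-translates recover the remaining $\ell(d)$ coordinates, producing an orbit of full dimension $\ell(w)$. The converse, \emph{$L_I$-spherical $\Rightarrow$ $I$-spherical}, is where I expect the main difficulty. If $d$ is not a Coxeter element of its support, then any reduced word for $d$ repeats some simple reflection; I would show this forces two residual weights onto a common $\Phi_I$-string in a way that the torus cannot separate and the unipotent radical cannot reach, yielding a one-parameter family of $B_{L_I}$-orbits and hence $c_{L_I}(X_w)\geq 1$. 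Making this obstruction precise, by controlling the genuinely nonlinear $U_{\Phi_I^+}$-action on the cell rather than only its infinitesimal weights and converting a repeated letter of $d$ into a concrete excess dimension, is the crux of the proof.

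Finally, the whole scheme is root-system uniform because it refers only to inversion sets, $\Phi_I$-root strings, and Coxeter elements, together with the general local structure theory of spherical varieties, so that no Lie-type case analysis enters. I would only need to take care with non-simply-laced strings, where a single $\Phi_I$-orbit of weights can be longer, when checking that the residual count still coincides with the Coxeter-element condition.
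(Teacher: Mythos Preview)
Your forward direction is close to the paper's, but the converse has a genuine gap, and you are missing the structural device that makes both directions clean.

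For $(\Leftarrow)$ your plan is essentially the paper's: split $B_{L_I}=T\ltimes U_{L_I}$ with $U_{L_I}=\prod_{\alpha\in\Phi_I^+}U_\alpha=U_{w_0(I)}$, and show the unipotent part contributes $|\Phi_I^+|=\ell(w_0(I))$ dimensions while the torus contributes $\ell(d)$ more when $d$ is Coxeter. However, your assertion that the unipotent directions ``act along the coordinate directions indexed by $\Phi_I^+$'' is not literally true: left multiplication by $U_{L_I}$ on $U_w\,wB$ is nonlinear in the root-subgroup coordinates, as you yourself note later. What makes this step work in the paper is the factorization $U_{w}=U_{w_0(I)}\,V_d=V_d\,U_{w_0(I)}$ with $V_d:=w_0(I)U_d w_0(I)$, i.e.\ a \emph{direct spanning} coming from $\mathcal I(w)=\Phi_I^+\sqcup w_0(I)(\mathcal I(d))$. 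This yields a slice $\tilde O:=V_d\,wB\subset X_w^{\circ}$ that is $T$-stable and $T$-equivariantly isomorphic to $X_d^{\circ}$, and on which one checks the crucial fact that for $y\in\tilde O$ one has $(B_{L_I})_y=T_y$. Your informal ``sweeping'' is exactly this direct-spanning lemma; without it the dimension count does not close.

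The real gap is $(\Rightarrow)$. You propose to show that a repeated simple reflection in $d$ forces two residual weights onto a common $\Phi_I$-root string that the torus cannot separate and the unipotent radical cannot reach. This is not a proof; it is a hope, and the $\Phi_I$-root-string picture is neither needed nor obviously correct for this purpose. The two weights you would obtain lie in $w_0(I)(\mathcal I(d))$, and there is no reason their difference should be a $\Phi_I$-root; all you actually know is that the $\ell(d)$ weights in $w_0(I)(\mathcal I(d))$ are linearly \emph{dependent}, which bounds the torus contribution but says nothing by itself about whether the nonlinear $U_{L_I}$-action might compensate. The paper bypasses this entirely: using the slice $\tilde O\cong X_d^{\circ}$ and the identity of $B_{L_I}$- and $T$-orbits on $\tilde O$, the question reduces to whether $X_d$ is a toric variety for $T$, which by Karuppuchamy's theorem happens iff $d$ is a Coxeter element. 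For the non-spherical direction the paper then invokes the characterization ``spherical $\Leftrightarrow$ finitely many Borel orbits'': if $d$ is not Coxeter, $X_d^{\circ}$ has infinitely many $T$-orbits, hence so does $\tilde O$, hence $X_w$ has infinitely many $B_{L_I}$-orbits. No root-string analysis, no control of the nonlinear unipotent action, is required.

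In short: replace your root-string heuristic by the slice $\tilde O$, the equality $(B_{L_I})_y=T_y$ on it, and the reduction to Karuppuchamy's toric criterion; then both directions become dimension/orbit counts with no case analysis.
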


Theorem~\ref{conj:main} resolves the main conjecture of the authors' earlier work \cite[Conjecture~4.1]{GHY2}
and completes the main goal set forth in \cite{Hodges.Yong1}. In \cite{GHY}, Theorem~\ref{conj:main} was established in the case $G=GL_n$ using essentially algebraic combinatorial methods concerning \emph{Demazure characters}
(or in their type $A$ embodiment, the \emph{key polynomials}). In contrast, the geometric arguments of this paper are quite different, significantly shorter, but require more background of the reader in algebraic groups. Theorem~\ref{conj:main} is a generalization of work of P.~Karuppuchamy \cite{Karrup} that characterizes Schubert varieties that are toric, which in our setup means spherical for the action of $L_{\emptyset}=T$. Using work of 
R.~S.~Avdeev--A. V. Petukhov \cite{AP14}, Theorem~\ref{conj:main} may also be seen as a generalization of some results of P.~Magyar--J.~Weyman--A.~Zelevinsky \cite{MWZ} and J.~Stembridge~\cite{STEM, Stem:Weyl} on
spherical actions on $G/B$; see \cite[Theorem~2.4]{Hodges.Yong1}. Previously, there was not even a finite algorithm to decide $L_I$-sphericality of $X_w$ in general.

\subsection{Organization} Examples of the main result are given in Section~\ref{sec:5}. Sections~\ref{sec:2} and~\ref{sec:3} prove Theorem~\ref{conj:main}. 
Section~\ref{sec:4} offers an application of our main result to the study of Demazure modules \cite{Demazure}.

\section{Examples of Theorem~\ref{conj:main}}\label{sec:5}
We begin with a few examples illustrating Theorem~\ref{conj:main}.

\begin{ex}[$E_8$ {cf.~\cite[Example~1.3]{Hodges.Yong1}}]
The $E_8$ Dynkin diagram is
\dynkin[label]E8. 
One associates the simple roots $\beta_i$ ($1\leq i\leq 8$) with 
this labeling and writes $s_i:=s_{\beta_i}$. Suppose 
\[w=s_2 s_3 s_4 s_2 s_3 s_4 s_5 s_4 s_2 s_3 s_1 s_4 s_5 s_6 s_7 s_6 s_8 s_7 s_6 \in W.\]
Then ${\mathcal D}_L(w)=\{\beta_2,\beta_3,\beta_4,\beta_5,\beta_7,\beta_8\}$. Let $I={\mathcal D}_L(w)$. 
Here 
\[w_0(I)=s_3 s_2 s_4 s_3 s_2 s_4 s_5 s_4 s_3 s_2 s_4 s_5 \cdot s_7 s_8 s_7 \text{\ \ and \ \ } w_0(I)w=s_1 s_6 s_7 s_8.\]
Since $w=w_0(I)c$ where $c=s_1 s_6 s_7 s_8$ is a standard Coxeter element, Theorem~\ref{conj:main} asserts
that $X_w$ is $L_I$-spherical in the complete flag variety for $E_8$.
\end{ex}

\begin{ex}[$F_4$ {cf.~\cite[Example~1.5]{Hodges.Yong1}}]
The $F_4$ diagram is \dynkin[label, edge length=0.5cm]F4. First suppose 
\[w=s_4 s_3 s_4 s_2 s_3 s_4 s_2 s_3 s_2 s_1 s_2 s_3 s_4 \text{\ ($I={\mathcal D}_L(w)=\{\beta_2,\beta_3,\beta_4\}$)}.\]
Then $w_0(I)= s_2 s_3 s_2 s_3 s_4 s_3 s_2 s_3 s_4$ and $w_0(I)w=s_1 s_2 s_3 s_4$ is standard
Coxeter. Hence $X_w$ is $L_I$-spherical. On the other hand if 
\[w'=s_2  s_1  s_4 s_3 s_2 s_1 s_3 s_2 s_4 s_3 s_2 s_1 \text{\  ($I={\mathcal D}_L(w')=\{\beta_2,\beta_4\}$)},\]
then $w_0(I)=s_2 s_4$ and $w_0(I)w=s_1 s_3 s_2 s_1 s_3 s_2 s_4 s_3 s_2 s_1$ is not standard
Coxeter and $X_w$ is not $L_{I}$-spherical.
\end{ex}

\begin{ex}[$D_4$]
The $D_4$ diagram is \dynkin[label, edge length=0.5cm]D4. Let 
\[w= s_3 s_2 s_3 s_4 s_2 s_1 s_2 \text{\ ($I={\mathcal D}_L(w)=\{\beta_2,\beta_3\}$).}\]
Thus $w_0(I)=s_2 s_3 s_2$ and $w_0(I)w=s_4 s_2 s_1 s_2$ is not standard Coxeter. Hence $X_w$
is not $L_I$-spherical. The interested reader can check $w$ is $I$-spherical in the (different) sense of \cite[Definition~1.2]{Hodges.Yong1}. Therefore, this $w$ provides a counterexample to \cite[Conjecture~1.9]{Hodges.Yong1} in
type $D_4$. This counterexample was also (implicitly) verified in \cite{GHY2} using a different method, namely 
Demazure character computations, the topic of Section~\ref{sec:4}.
\end{ex}

\section{An equivariant isomorphism}\label{sec:2}
The primary goal of this section is to construct a torus equivariant isomorphism from a specified affine subspace of the open cell of a Schubert variety
to the open cell of a distinguished Schubert subvariety. In what follows, we assume standard facts from the theory of algebraic groups. References we draw
upon are \cite{Humphreys, B91, LR08}.

Let $w \in W$. Let $n_w$ be a coset representative of $w$ in $N_G(T)$. By definition of $N_G(T)$ being the
normalizer of $T$ in $G$, $t \mapsto n_w t n_w^{-1}$ defines an automorphism $\gamma_w:T\to T$.

\begin{lemma}
\label{lemma:wellDefinedt_w}
The automorphism $\gamma_w$ does not depend on our choice of coset representative $n_w$.
\end{lemma}
\begin{proof}
Suppose that $m_w$ is another coset representative of $w$. Then $m_w = n_w s$ for some $s \in T$. Hence $m_w t m_w^{-1} = n_w s t s^{-1} n_w^{-1} = n_w t s s^{-1}n_w^{-1}=n_w t n_w^{-1}$.
\end{proof} 

In light of Lemma~\ref{lemma:wellDefinedt_w}, henceforth for $w \in W$ we will also let $w$ denote a coset representative of $w$ in $N_G(T)$. Let $X$ be a $T$-variety with action denoted by $\bigdot$. For each $w \in W$ we define an action $\bigdot_w$ on $X$ by $t \bigdot_w x = \gamma_w(t) \bigdot x$ for all $x \in X$ and $t \in T$. 

\begin{lemma}
\label{lemma:denseByAutomorphism}
For all $w \in W$, the $T$-variety $X$ has an open, dense $T$-orbit for the action $\bigdot$ if and only if it has an open, dense $T$-orbit for the action $\bigdot_w$. Indeed, the set of $T$-orbits in $X$ for these two actions is 
identical. 
\end{lemma}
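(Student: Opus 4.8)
The plan is to prove the stronger ``Indeed'' statement first---that the two actions partition $X$ into \emph{exactly the same} orbits as subsets of $X$---and then deduce the open--dense equivalence as a formal consequence.

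The crux is that $\gamma_w$ is an \emph{automorphism} of $T$ (well defined independently of the representative $n_w$ by Lemma~\ref{lemma:wellDefinedt_w}), so in particular it is a bijection of the underlying set $T$; hence $\gamma_w(T)=T$. Fixing $x \in X$, I would compute the $\bigdot_w$-orbit of $x$ directly from the definition $t \bigdot_w x = \gamma_w(t) \bigdot x$:
\[
\{\,t \bigdot_w x : t \in T\,\} = \{\,\gamma_w(t) \bigdot x : t \in T\,\} = \{\,s \bigdot x : s \in \gamma_w(T)\,\} = \{\,s \bigdot x : s \in T\,\},
\]
where the final equality is $\gamma_w(T)=T$. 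Thus the $\bigdot_w$-orbit of $x$ coincides with its $\bigdot$-orbit. As $x$ was arbitrary, the two actions have identical orbits, which is precisely the last sentence of the lemma.

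For the biconditional, I would observe that being open and being dense are intrinsic topological properties of a subset of $X$, making no reference to the group action that produced it. Therefore, if some $\bigdot$-orbit $O \subseteq X$ is open and dense, then by the previous paragraph that same subset $O$ is a $\bigdot_w$-orbit, and it is open and dense; the converse is symmetric. This yields the stated equivalence.

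I expect no genuine obstacle here: the only hypothesis requiring care is the surjectivity of $\gamma_w$, which is automatic since a group automorphism is bijective. Notably, no normality, irreducibility, or even the variety structure on $X$ enters this step---the argument is purely set-theoretic together with the remark that openness and density are properties of subsets alone.
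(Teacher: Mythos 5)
Your proposal is correct and matches the paper's argument: both hinge on the surjectivity of the automorphism $\gamma_w$ to show that the $\bigdot$-orbit and $\bigdot_w$-orbit of each point coincide as subsets of $X$, after which the open--dense equivalence is immediate. The only cosmetic difference is that you compute the orbit in one displayed set equality while the paper verifies the two containments separately.
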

\begin{proof}
Let $\mathcal{O}$ be a $T$-orbit in $X$ for the $\bigdot$ action. Let $x,y \in \mathcal{O}$ and $t \in T$ be such that $t \bigdot x = y$. As $\gamma_w$ is an automorphism, there exists a $t^{\prime} \in T$ such that $\gamma_w(t^{\prime}) = t$. Then 
\[t^{\prime} \bigdot_w x = \gamma_w (t^{\prime}) \bigdot x = t \bigdot x = y.\] 
Thus $\mathcal{O}$ is contained in the $T$-orbit $\mathcal{O}'$ of $x$ for the action $\bigdot_w$.  
The reverse containment is true by definition of $\bigdot_w$.
 The lemma
follows.
\end{proof}

For the remainder, we fix $\bigdot$ to be the restriction to $T$ of the action of $G$ on $G/B$ by left multiplication. 
The \emph{left inversion set} of $w \in W$ is 
\[{\mathcal I}(w):=\Phi^+ \cap w(\Phi^-) = \{ \alpha \in \Phi^+ | w^{-1}(\alpha) \in  \Phi^- \}.\] 
Recall two standard facts regarding left inversion sets \cite[Chapter 1]{Humphreys:grey}. For $w \in W$,
\begin{equation}
\label{eqn:leftinvIdent1}
|{\mathcal I}(w)| = \ell(w) = \dim_{\mathbb C} X_w,
\end{equation}
and
\begin{equation}
\label{eqn:leftinvIdent2}
{\mathcal I}(w_0(I)) = \Phi^+(I),
\end{equation}
where $\Phi(I)=\Phi({\mathfrak l}_I,T)$ is the root system for the adjoint action of $T$ on ${\mathfrak l}_I={\rm Lie}(L_I)$.

We say that an algebraic group $H$ is \emph{directly spanned} by its closed subgroups $H_1,\ldots,H_n$, in the given order, if the product morphism 
\[H_1 \times \cdots \times H_n \rightarrow H\] 
is bijective. For $w \in W$, define $U_w := U \cap w U^- w^{-1}$, where $U$ consists of the unipotent elements of $B$ and similarly, $U^{-}$ is
the unipotent part of $B^{-}:=w_0Bw_0$. This is a subgroup of $U$ that is closed and normalized by $T$. Hence, by \cite[\S 14.4]{B91}, $U_w$ is directly spanned, in any order, by the \emph{root subgroups} $U_{\alpha}$, $\alpha \in \Phi^+$, contained in $U_w$. Since by  \cite[Part II, 1.4(5)]{J03},
\begin{equation}\label{eqn:conjthat}
wU_{\alpha}w^{-1}=U_{w(\alpha)},
\end{equation}
  these are the $U_{\alpha}$ such that $\alpha \in \Phi^+ \cap w(\Phi^-) = {\mathcal I}(w)$. Thus
\begin{equation}
\label{eqn:leftinvIdent2.5}
U_w = \prod_{\alpha \in {\mathcal I}(w)} U_{\alpha},
\end{equation}
where the products $U_{\alpha}$ may be taken in any order.

\begin{lemma}\label{prop:anyorder}
For a coset $wB \in G/B$, we have 
\begin{equation}
\label{eqn:leftinvIdent3}
X_w^{\circ} := BwB = U_w wB = \prod_{\alpha \in {\mathcal I}(w)} U_{\alpha}\,\,wB.
\end{equation}
Moreover, $X_w^{\circ}$ is isomorphic to the affine space $\mathbb{A}^{\ell(w)}$ (as varieties).
\end{lemma}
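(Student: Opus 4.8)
The plan is to strip $BwB$ down to $U_w wB$ in two stages and then identify $U_w$, and hence the cell, with affine space. First I would peel off the torus and the ``wrong-sign'' root subgroups. Writing $B = UT$ and using $w\in N_G(T)$ gives $TwB = w(w^{-1}Tw)B = wTB = wB$, so $BwB = UTwB = UwB$; thus it remains to replace $U$ by $U_w$. Decompose $\Phi^+ = {\mathcal I}(w)\sqcup(\Phi^+\cap w(\Phi^+))$ and let $U'_w$ be the closed, $T$-stable subgroup of $U$ directly spanned by the root subgroups $U_\alpha$ with $\alpha\in\Phi^+\cap w(\Phi^+)$, exactly as for $U_w$ via \cite[\S14.4]{B91}. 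For such $\alpha$ we have $w^{-1}(\alpha)\in\Phi^+$, so by \eqref{eqn:conjthat}, $w^{-1}U_\alpha w = U_{w^{-1}(\alpha)}\subseteq U\subseteq B$, whence $U_\alpha wB = w(w^{-1}U_\alpha w)B = wB$, and therefore $U'_w wB = wB$. Since $U$ is directly spanned in any order by all $U_\alpha$, $\alpha\in\Phi^+$, I may order the factors so that those indexed by ${\mathcal I}(w)$ come first, exhibiting $U = U_w U'_w$ at the level of sets; hence $UwB = U_w U'_w wB = U_w wB$. Combined with \eqref{eqn:leftinvIdent2.5} this yields the displayed identity \eqref{eqn:leftinvIdent3}.

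Next I turn to the affine-space claim. Because $U_w$ is directly spanned by the root subgroups $U_\alpha\cong\mathbb{A}^1$, $\alpha\in{\mathcal I}(w)$, the product morphism $\prod_{\alpha\in{\mathcal I}(w)}U_\alpha\to U_w$ is an isomorphism of varieties by \cite[\S14]{B91}, so $U_w\cong\mathbb{A}^{\ell(w)}$ using \eqref{eqn:leftinvIdent1}. It then remains to show the orbit map $\mu\colon U_w\to G/B$, $u\mapsto uwB$, is an isomorphism onto $X_w^{\circ}$. Injectivity is immediate: if $uwB=u'wB$ with $u,u'\in U_w$, then $w^{-1}(u^{-1}u')w\in B$; but by \eqref{eqn:conjthat} we have $w^{-1}U_w w=\prod_{\alpha\in{\mathcal I}(w)}U_{w^{-1}(\alpha)}$ with each $w^{-1}(\alpha)\in\Phi^-$, so $w^{-1}(u^{-1}u')w\in U^-\cap B=\{e\}$, forcing $u=u'$.

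The hard part is upgrading this set-theoretic bijection onto $X_w^{\circ}$ to an isomorphism of varieties, and for this I would exploit the big-cell chart around $wB$. The map $u^-\mapsto wu^-B$ is an open immersion $U^-\hookrightarrow G/B$ whose image is the open affine neighborhood $\Omega := wU^-B/B$ of $wB$; write $\phi\colon\Omega\xrightarrow{\sim}U^-$ for its inverse. The computation above shows $w^{-1}uw\in U^-$ for every $u\in U_w$, so $uwB=w(w^{-1}uw)B\in\Omega$ and $\phi(\mu(u))=w^{-1}uw$. Thus $\mu$ factors as the conjugation isomorphism $U_w\xrightarrow{\sim}w^{-1}U_w w$, followed by the inclusion $w^{-1}U_w w\hookrightarrow U^-$, followed by $\phi^{-1}$. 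Since $w^{-1}U_w w=\prod_{\alpha\in{\mathcal I}(w)}U_{w^{-1}(\alpha)}$ is a directly-spanned, hence closed, subgroup of $U^-$, the composite $\mu$ is an isomorphism of $U_w$ onto a closed subvariety of $\Omega$, which is precisely the locally closed $X_w^{\circ}$. Hence $X_w^{\circ}\cong U_w\cong\mathbb{A}^{\ell(w)}$, completing the proof.
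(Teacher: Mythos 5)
Your argument is correct. The paper disposes of this lemma almost entirely by citation: the isomorphism $u\mapsto uwB$ from $U_w$ onto $X_w^{\circ}$ is quoted from \cite[\S 14.12]{B91}, the identification $U_w\cong\mathbb{A}^{\ell(w)}$ comes from the Remark in \cite[\S 14.4]{B91}, and the final equality in \eqref{eqn:leftinvIdent3} is just \eqref{eqn:leftinvIdent2.5}. You instead unwind those citations: the factorization $U=U_wU'_w$ with $U'_w=U\cap wUw^{-1}$ absorbed into $wB$ gives $BwB=U_wwB$, and the translated big-cell chart $wU^-B/B\cong U^-$, together with the closedness of the directly spanned subgroup $w^{-1}U_ww\leq U^-$, upgrades the orbit map from a bijection to an isomorphism of varieties. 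Each step is sound --- the injectivity argument via $U^-\cap B=\{e\}$ and the disposal of $U'_w$ are exactly the standard Bruhat-cell computations --- so the difference is one of economy rather than substance: the paper's proof is three sentences of references, while yours is a self-contained derivation of the same facts. If anything, your version makes explicit why $X_w^{\circ}$ is locally closed and why the set-theoretic bijection is genuinely an isomorphism, points the citations leave implicit.
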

\begin{proof}
It is a well-known fact that $U_w$ is isomorphic to $X_w^{\circ}$ (as varieties) under the map $u \mapsto uwB,\,u\in U_w$~\cite[\S 14.12]{B91}. The final equality in \eqref{eqn:leftinvIdent3} is \eqref{eqn:leftinvIdent2.5}. By~\cite[Remark in \S 14.4]{B91}, $U_w$ is isomorphic, as a variety, to $\mathbb{A}^{\ell(w)}$.
\end{proof}
 
We say that $w=uv\in W$ is \emph{length additive} if $\ell(uv)=\ell(u)+\ell(v)$. Under this
hypothesis, by \cite[Ch. VI, \S 1, Cor. 2 of Prop. 17]{B02} one has 
\[{\mathcal I}(uv)={\mathcal I}(u)\sqcup u({\mathcal I}(v)).\] 
Therefore, in particular, if we assume
$w_0(I)d \in W$ is \emph{length additive}, then 
\begin{equation}
\label{eqn:splittingUpw0Id}
{\mathcal I}(w_0(I)d)={\mathcal I}(w_0(I))\sqcup w_0(I)({\mathcal I}(d)).
\end{equation} 

Define   
\[V_d := w_0(I) U_d w_0(I)^{-1}=w_0(I)U_d w_0(I).\] 
\begin{lemma}\label{lemma:iscldsub}
$V_d$ is a closed subgroup of $U_{w_0(I)d}$ that is normalized by $T$.
\end{lemma}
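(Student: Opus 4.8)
The plan is to identify $V_d$ explicitly as a product of root subgroups and then read off all three required properties, two of which are essentially formal. First I would feed the direct spanning \eqref{eqn:leftinvIdent2.5} of $U_d$ into the conjugation formula \eqref{eqn:conjthat}: since $U_d=\prod_{\alpha\in{\mathcal I}(d)}U_{\alpha}$, conjugating by (our fixed representative of) $w_0(I)$ yields
\[
V_d=w_0(I)\Bigl(\prod_{\alpha\in{\mathcal I}(d)}U_{\alpha}\Bigr)w_0(I)^{-1}=\prod_{\alpha\in{\mathcal I}(d)}U_{w_0(I)(\alpha)}.
\]

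The subgroup and closedness properties are then immediate and need nothing beyond the definition of $V_d$: conjugation by the fixed element $w_0(I)\in N_G(T)\subset G$ is an inner automorphism of $G$ as an algebraic group, in particular an isomorphism of varieties. It therefore carries the closed subgroup $U_d$ onto a closed subgroup, so $V_d$ is a closed subgroup of $G$.

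For the containment $V_d\subseteq U_{w_0(I)d}$ I would invoke the length-additivity splitting \eqref{eqn:splittingUpw0Id}, which gives $w_0(I)({\mathcal I}(d))\subseteq{\mathcal I}(w_0(I)d)$. Thus for every $\alpha\in{\mathcal I}(d)$ the root $w_0(I)(\alpha)$ is positive and indexes one of the root subgroups that directly span $U_{w_0(I)d}$ via \eqref{eqn:leftinvIdent2.5}; in particular each factor $U_{w_0(I)(\alpha)}$ already lies inside the group $U_{w_0(I)d}$, and hence so does the product $V_d$. Finally, for $T$-normalization I would argue directly: writing $n$ for the chosen representative of $w_0(I)$ and fixing $t\in T$, the element $t':=n^{-1}tn$ again lies in $T$ because $n$ normalizes $T$, so
\[
tV_dt^{-1}=t\,nU_dn^{-1}\,t^{-1}=n\,(t'U_d{t'}^{-1})\,n^{-1}=nU_dn^{-1}=V_d,
\]
using that $U_d$ is normalized by $T$. (Equivalently, each factor $U_{w_0(I)(\alpha)}$ is a root subgroup and so is $T$-stable.)

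The only genuinely substantive point is the containment step, and the entire weight of it rests on \eqref{eqn:splittingUpw0Id}: length-additivity of $w_0(I)d$ is exactly what forces $w_0(I)$ to send the inversions of $d$ to \emph{positive} roots, keeping the conjugated root subgroups inside $U$ (and indeed inside $U_{w_0(I)d}$). Without that hypothesis some $w_0(I)(\alpha)$ could be negative and the containment would fail, so I expect this to be where the reader's attention belongs; the remaining assertions are formal consequences of $V_d$ being a $T$-equivariant conjugate of $U_d$.
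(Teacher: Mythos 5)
Your proposal is correct and follows essentially the same route as the paper's proof: both identify $V_d$ as $\prod_{\alpha\in w_0(I)(\mathcal{I}(d))}U_{\alpha}$ via \eqref{eqn:leftinvIdent2.5} and \eqref{eqn:conjthat}, and both deduce the containment in $U_{w_0(I)d}$ from the length-additivity splitting \eqref{eqn:splittingUpw0Id}. You merely spell out the closedness and $T$-normalization computations that the paper dispatches in one sentence, and your closing remark correctly pinpoints length-additivity as the load-bearing hypothesis.
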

\begin{proof}
Since $U_d$ is a closed subgroup normalized by $T$, so is $V_d$. Indeed, $V_d$ is a subgroup of $U_{w_0(I)d}$ since
\begin{equation}
\label{eqn:Vddef}
V_d = w_0(I) \prod_{\alpha \in {\mathcal I}(d)} U_{\alpha} w_0(I) = \prod_{\alpha \in w_0(I)({\mathcal I}(d))}  U_{\alpha} \leq U_{w_0(I)d}, 
\end{equation}
where the first equality is \eqref{eqn:leftinvIdent2.5}, the second is (\ref{eqn:conjthat}), and the subgroup
claim is \eqref{eqn:leftinvIdent2.5} and~\eqref{eqn:splittingUpw0Id}.
\end{proof}

\begin{lemma}\label{lemma:directlyspanned}
$U_{w_0(I)d}$ is directly spanned by $U_{w_0(I)}$ and $V_d$:
\begin{equation}
\label{eqn:carefulDirectSpanNotSemidirectProd}
U_{w_0(I)d} = U_{w_0(I)} V_d = V_d U_{w_0(I)}.
\end{equation} 
\end{lemma}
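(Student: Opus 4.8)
The plan is to exploit the fact that $U_{w_0(I)d}$ is directly spanned by its root subgroups \emph{in any order}, together with the disjoint decomposition of the inversion set recorded in \eqref{eqn:splittingUpw0Id}. Since $U_{w_0(I)d} = \prod_{\alpha \in {\mathcal I}(w_0(I)d)} U_{\alpha}$ with the product taken in any order by \eqref{eqn:leftinvIdent2.5}, and ${\mathcal I}(w_0(I)d) = {\mathcal I}(w_0(I)) \sqcup w_0(I)({\mathcal I}(d))$, I would choose an ordering of the roots in which every root of ${\mathcal I}(w_0(I))$ precedes every root of $w_0(I)({\mathcal I}(d))$, and then argue that grouping the resulting factored product into its two blocks recovers exactly $U_{w_0(I)}$ and $V_d$.

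Concretely, first I would enumerate ${\mathcal I}(w_0(I)) = \{\alpha_1, \ldots, \alpha_k\}$ and $w_0(I)({\mathcal I}(d)) = \{\alpha_{k+1}, \ldots, \alpha_n\}$, so that the product morphism $U_{\alpha_1} \times \cdots \times U_{\alpha_n} \to U_{w_0(I)d}$ is bijective. By \eqref{eqn:leftinvIdent2.5} applied to $w_0(I)$, the partial product $U_{\alpha_1} \cdots U_{\alpha_k}$ is precisely the subgroup $U_{w_0(I)}$, and by \eqref{eqn:Vddef} the partial product $U_{\alpha_{k+1}} \cdots U_{\alpha_n}$ is precisely $V_d$. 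This immediately yields the set equality $U_{w_0(I)d} = U_{w_0(I)} V_d$, and reversing the chosen ordering (again permissible since the spanning holds in any order) gives $U_{w_0(I)d} = V_d U_{w_0(I)}$.

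It remains to promote these set equalities to the assertion of direct spanning, i.e.\ that the product morphism $U_{w_0(I)} \times V_d \to U_{w_0(I)d}$ is bijective; this is the only point requiring care. Surjectivity is immediate from the surjectivity of the full product morphism together with the grouping above. For injectivity, suppose $uv = u'v'$ with $u, u' \in U_{w_0(I)}$ and $v, v' \in V_d$; writing each factor via the direct spannings of $U_{w_0(I)}$ and $V_d$ by their own root subgroups, both sides expand to products over $\alpha_1, \ldots, \alpha_n$, and the injectivity of the full product morphism forces the coordinates to agree factor by factor, whence $u = u'$ and $v = v'$. The main obstacle is thus essentially bookkeeping: one must confirm that the single bijective product map over all of ${\mathcal I}(w_0(I)d)$ restricts and factors compatibly through the two subgroups, which is exactly what the any-order hypothesis and the disjointness in \eqref{eqn:splittingUpw0Id} guarantee.
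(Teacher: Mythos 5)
Your proof is correct and follows exactly the paper's argument, which combines \eqref{eqn:leftinvIdent2.5}, \eqref{eqn:splittingUpw0Id}, and \eqref{eqn:Vddef}; you have simply spelled out the bookkeeping (choosing an ordering of ${\mathcal I}(w_0(I)d)$ compatible with the disjoint union and checking bijectivity of the grouped product map) that the paper leaves implicit.
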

\begin{proof}
This follows from \eqref{eqn:leftinvIdent2.5}, \eqref{eqn:splittingUpw0Id}, and \eqref{eqn:Vddef} combined.
\end{proof}

Define 
\[ \tilde{O} := V_d w_0(I)dB \subseteq G/B. \] 
\begin{lemma}
$\tilde{O}$ is $T$-stable for the action $\bigdot$.
\end{lemma}
\begin{proof}
The claim follows since
\[V_d w_0(I)dB  = (t V_d t^{-1}) tw_0(I)dB \subseteq  V_d w_0(I)dB,\]
where the final step follows from the fact that $V_d$ is normalized by $T$ and that $w_0(I)dB$ is a $T$-fixed point in $G / B$. \end{proof}

The following is the main point of this section:

\begin{proposition}
\label{prop:TequivIso}
If $w_0(I)d \in W$ is length additive then \[ X_{w_0(I)d}^{\circ} = U_{w_0(I)d}\,w_0(I)dB. \]
Hence $\tilde{O} \subset X_{w_0(I)d}^{\circ}$. Moreover, $\tilde{O}$ with the $T$-action $\bigdot$ is $T$-equivariantly isomorphic to $X_{d}^{\circ}$ with the $T$-action $\bigdot_{w_0(I)}$.
\end{proposition}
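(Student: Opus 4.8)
The plan is to realize the desired isomorphism as left-translation on $G/B$ by a coset representative $n_{w_0(I)} \in N_G(T)$ of $w_0(I)$, after disposing of the two preliminary assertions. The identity $X_{w_0(I)d}^{\circ} = U_{w_0(I)d}\, w_0(I)dB$ is exactly Lemma~\ref{prop:anyorder} applied to the element $w_0(I)d$. Granting it, the inclusion $\tilde{O} \subseteq X_{w_0(I)d}^{\circ}$ is immediate from the containment $V_d \le U_{w_0(I)d}$ of Lemma~\ref{lemma:iscldsub} (which is where length-additivity enters, through \eqref{eqn:splittingUpw0Id}): indeed $\tilde{O} = V_d\, w_0(I)dB \subseteq U_{w_0(I)d}\, w_0(I)dB = X_{w_0(I)d}^{\circ}$.

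For the main assertion I would set $\phi\colon G/B \to G/B$ to be left multiplication $x \mapsto n_{w_0(I)} \cdot x$, an automorphism of the variety $G/B$, and restrict it to $X_{d}^{\circ}$. The first task is to identify the image. Using $X_{d}^{\circ} = U_d\, dB$ (Lemma~\ref{prop:anyorder} for $d$) together with the definition $V_d = n_{w_0(I)} U_d\, n_{w_0(I)}^{-1}$, I would compute
\[ \phi(X_{d}^{\circ}) = n_{w_0(I)} U_d\, dB = \bigl(n_{w_0(I)} U_d\, n_{w_0(I)}^{-1}\bigr)\, n_{w_0(I)} dB = V_d\, w_0(I)dB = \tilde{O}. \]
Since $\phi$ is an automorphism of $G/B$, its restriction to the locally closed subvariety $X_{d}^{\circ}$ is an isomorphism onto its image $\tilde{O}$; this settles the claim at the level of varieties.

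It then remains to check that $\phi$ intertwines the two torus actions, i.e. $\phi(t \bigdot_{w_0(I)} x) = t \bigdot \phi(x)$ for all $t \in T$ and $x \in X_{d}^{\circ}$. Here I would use the general intertwining property of left-translation, $t \cdot (g \cdot x) = g \cdot \bigl((g^{-1} t g) \cdot x\bigr)$, so that for $g = n_{w_0(I)}$ the conjugation $g^{-1} t g = n_{w_0(I)}^{-1} t\, n_{w_0(I)} = \gamma_{w_0(I)}^{-1}(t)$ appears; concretely $t \bigdot \phi(x) = \phi\bigl(\gamma_{w_0(I)}^{-1}(t) \bigdot x\bigr)$. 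Since $w_0(I)$ is an involution in $W$, one has $n_{w_0(I)}^2 \in T$, which is central in $T$, whence $\gamma_{w_0(I)}^{-1} = \gamma_{w_0(I)}$ as automorphisms of $T$; therefore $t \bigdot \phi(x) = \phi\bigl(\gamma_{w_0(I)}(t) \bigdot x\bigr) = \phi(t \bigdot_{w_0(I)} x)$, as desired. In combination with Lemma~\ref{lemma:denseByAutomorphism}, this is precisely the device that will later let us transport density of a torus orbit between $X_{d}^{\circ}$ and $\tilde{O}$.

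The formal points (that $\phi$ is a morphism restricting to an isomorphism, and the image computation) are routine. I expect the only genuinely delicate step to be the equivariance bookkeeping: one must keep straight the distinction between the Weyl-group element $w_0(I)$ and its representative $n_{w_0(I)}$, track the side on which $\gamma_{w_0(I)}$ acts, and invoke $w_0(I)^2 = e$ so that the twist produced by left-translation is exactly $\gamma_{w_0(I)}$ rather than $\gamma_{w_0(I)}^{-1}$. That involution identity, equivalently $n_{w_0(I)}^2 \in T$, is what makes the action $\bigdot_{w_0(I)}$—and not its inverse twist—the correct one in the statement.
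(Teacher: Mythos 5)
Your proof is correct and takes essentially the same route as the paper's: both realize the isomorphism as left translation by a representative of $w_0(I)$, the only differences being that you map $X_d^{\circ}\to\tilde{O}$ rather than $\tilde{O}\to X_d^{\circ}$ (which forces the extra, correctly handled, observation that $\gamma_{w_0(I)}$ is an involution), and that you justify the isomorphism of varieties by restricting an automorphism of $G/B$ where the paper instead uses bijectivity plus normality of $X_d^{\circ}\cong\mathbb{A}^{\ell(d)}$ and Zariski's main theorem.
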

\begin{proof}
By \eqref{eqn:leftinvIdent3}, $X_{w_0(I)d}^{\circ} = U_{w_0(I)d}\,w_0(I)dB$. Combining this with
Lemma~\ref{lemma:iscldsub}, one concludes that $\tilde{O} \subseteq X_{w_0(I)d}^{\circ}$.
Define a map 
\begin{alignat*}{3}
  \phi: {\tilde O} &\longrightarrow X_{d}^{\circ} \\
  aB &\longmapsto w_0(I)aB.
\end{alignat*}
Now, 
\[\phi(\tilde{O}) = w_0(I) V_d\,w_0(I)dB  
= U_d\,dB = X_{d}^{\circ},\] 
where the second equality is by the definition of $V_d$,  and the final equality is Lemma~\ref{prop:anyorder}. Thus $\phi$ is well-defined and surjective.

As $\phi$ is simply left multiplication by $w_0(I)$ it is injective. Further, by Lemma~\ref{prop:anyorder} $X_{d}^{\circ}$ is isomorphic as a variety to $\mathbb{A}^{\ell(d)}$, and thus is smooth, and in particular normal. Hence, by Zariski's main lemma, $\phi$ is an isomorphism of varieties. 

To see that $\phi$ is $T$-equivariant for the specified actions, let $t \in T$ and $aB \in \tilde{O}$. Then \[ \phi(t \bigdot aB) = w_0(I)taB = w_0(I)tw_0(I)w_0(I)aB = \gamma_{w_0(I)}(t) \bigdot \phi(aB) = t \bigdot_{w_0(I)} \phi(aB).\qedhere\]\end{proof}

\section{Proof of the main result}\label{sec:3}
We need a lemma examining the $L_I$-action on $\tilde{O}$. This lemma is then used in conjunction with Proposition~\ref{prop:TequivIso} to prove our main result.

Let $B_{L_I}=L_I\cap B$ and let $U_{L_I}$ be the unipotent radical of $B_{L_I}$. Then $B_{L_I}$ is a Borel subgroup in $L_I$~\cite[\S 14.17]{B91} with $U_{L_I}=B_{L_I}\cap U$ and $B_{L_I}=T \ltimes U_{L_I}$. Since $L_I$ is the subgroup of $G$ generated by $T$ and  $\{U_{\alpha} \,\mid\, \alpha \in \Phi(I)\}$~\cite[\S 3.2.2]{LR08}, it is straightforward to show that 
\[U_{L_I} = \prod_{\alpha \in \Phi^+(I)} U_{\alpha},\] 
where the product is taken in any order~\cite[\S 14.4]{B91}.

\begin{lemma}
\label{lemma:orbitDynamics}
Let $w = w_0(I) d \in W$ be length additive. Let $x \in X_{w_0(I)d}^{\circ} \setminus \tilde{O}$ and $y,z \in \tilde{O}$.
\begin{itemize}
\item[(i)] $uy \notin \tilde{O}$ for all $u \in U_{L_I}$ with $u \neq e$.
\item[(ii)] $tx \notin \tilde{O}$ for all $t \in T$.
\item[(iii)] There exists $b \in B_{L_I}$ such that $by=z$ if and only if there exists $t \in T$ such that $ty = z$.
\end{itemize}
\end{lemma}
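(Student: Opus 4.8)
The plan is to first establish a canonical normal form for points of $X_{w_0(I)d}^{\circ}$ that exhibits $\tilde O$ as a "slice," after which all three parts reduce to short bookkeeping. The key preliminary observation is that $U_{w_0(I)}=U_{L_I}$: by \eqref{eqn:leftinvIdent2} we have ${\mathcal I}(w_0(I))=\Phi^+(I)$, so the product formula \eqref{eqn:leftinvIdent2.5} for $U_{w_0(I)}$ coincides with the stated product formula for $U_{L_I}$. Consequently Lemma~\ref{lemma:directlyspanned} reads $U_{w_0(I)d}=U_{L_I}V_d=V_d U_{L_I}$ as a direct span, and moreover $U_{L_I}\cap V_d=\{e\}$ since $\Phi^+(I)$ and $w_0(I)({\mathcal I}(d))$ are disjoint by \eqref{eqn:splittingUpw0Id}. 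Combining the direct span with Proposition~\ref{prop:TequivIso} and the bijectivity of $g\mapsto g\,w_0(I)dB$ on $U_{w_0(I)d}$ (Lemma~\ref{prop:anyorder}), every point of $X_{w_0(I)d}^{\circ}$ is uniquely $uv\,w_0(I)dB$ with $u\in U_{L_I}$ and $v\in V_d$, and $\tilde O$ is exactly the locus $u=e$.

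For (i), I would write $y=v\,w_0(I)dB$ with $v\in V_d$, so that $uy=uv\,w_0(I)dB$; since the $U_{L_I}$-component of $uv$ in the normal form is the given $u\neq e$, the point $uy$ lies outside the locus $u=e$, i.e. $uy\notin\tilde O$. For (ii), write $x=u_0v_0\,w_0(I)dB$ with $u_0\neq e$ (as $x\notin\tilde O$). Pushing $t$ through, using that $T$ normalizes both $U_{L_I}$ and $V_d$ and fixes the point $w_0(I)dB$, yields $tx=(tu_0t^{-1})(tv_0t^{-1})\,w_0(I)dB$. Since each $U_\alpha$ is $T$-stable with $t$ acting by the nonzero scalar given by the root, conjugation by $t$ is an automorphism of $U_{L_I}$, so $tu_0t^{-1}\neq e$ and hence $tx\notin\tilde O$.

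For (iii), the "if" direction is immediate because $T\subseteq B_{L_I}$. For the converse, I would write $b=tu$ with $t\in T$ and $u\in U_{L_I}$, using $B_{L_I}=T\ltimes U_{L_I}$. Then with $y=v\,w_0(I)dB$ one computes $by=tuy=(tut^{-1})(tvt^{-1})\,w_0(I)dB$, again invoking $T$-normality of $U_{L_I}$, $V_d$ and $T$-fixity of $w_0(I)dB$. Since $z=by\in\tilde O$, uniqueness of the normal form forces $tut^{-1}=e$, whence $u=e$ and $b=t\in T$, giving $ty=z$.

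The main obstacle is entirely in the first paragraph: one must correctly identify $U_{w_0(I)}$ with $U_{L_I}$ and verify that the direct-span decomposition of Lemma~\ref{lemma:directlyspanned}, combined with the injectivity of the orbit map from Lemma~\ref{prop:anyorder}, produces a genuinely \emph{unique} normal form in which $\tilde O$ is cut out by the single condition $u=e$. Once this slice description is in hand, parts (i)--(iii) are three routine consequences of uniqueness together with the fact that conjugation by a torus element carries $U_{L_I}\setminus\{e\}$ into itself.
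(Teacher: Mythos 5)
Your proposal is correct and follows essentially the same route as the paper: both hinge on identifying $U_{L_I}=U_{w_0(I)}$ via \eqref{eqn:leftinvIdent2} and \eqref{eqn:leftinvIdent2.5}, and then reading $\tilde O$ off as the $u=e$ slice of the direct-span factorization $U_{w_0(I)d}=U_{L_I}V_d$ from Lemma~\ref{lemma:directlyspanned} together with the bijectivity of $g\mapsto g\,w_0(I)dB$. The only cosmetic difference is that you prove (ii) by conjugating through the normal form and (iii) by uniqueness of that normal form, where the paper gets (ii) in one line from the $T$-stability of $\tilde O$ and deduces (iii) by chaining (i) and (ii); both are valid.
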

\begin{proof}
(i) We have \[ U_{L_I} = \prod_{\alpha \in \Phi^+(I)} U_{\alpha} = U_{w_0(I)}, \] where the final equality is \eqref{eqn:leftinvIdent2.5}. Thus $u \in U_{w_0(I)}$.

Since $y \in \tilde{O}$, we have that $y=v w_0(I)dB$ for some $v \in V_d$. By
Lemma~\ref{lemma:directlyspanned}, $uv \in U_{w_0(I)d} \setminus V_d$ for $u \neq e$. Thus  $uv w_0(I)dB \in X_{w_0(I)d}^{\circ} \setminus \tilde{O}$ by Lemma~\ref{prop:anyorder}.

\noindent (ii) This follows immediately from the fact that $\tilde{O}$ is $T$-stable.

\noindent (iii) The Borel $B_{L_I} = T \ltimes U_{L_I}$, and thus for all $b \in B_{L_I}$ we may express $b=tu$ for unique $t\in T, u \in U_{L_I}$. If $u \neq e$, then $uy \notin \tilde{O}$ by (i) and so $by=tuy\notin \tilde{O}$ by (ii). Hence, if $by=z$, then $u = e$ and $b = t \in T$. The converse direction is immediate since $T \subseteq B_{L_I}$.
\end{proof}

We now have the necessary ingredients to complete the proof of our main result.

\noindent \emph{Proof of Theorem~\ref{conj:main}:} ($\Leftarrow$) Let $w$ be $I$-spherical. Then $w = w_0(I) c$ is length additive and $c$ is a standard Coxeter element. Our goal is to exhibit a $x\!\in\!\tilde{O}$ such that $\dim(B_{L_I}\!\bigdot\!x)\!=\!\dim X_{w_0(I)c}^{\circ}$.

The Schubert variety $X_{c}$ is a toric variety~\cite{Karrup}; it contains an open, dense $T$-orbit $\mathcal{O}$ for the $T$-action $\bigdot$. Since $X_{c}^{\circ}$ is an open, dense subset of $X_{c}$, $\mathcal{O} \cap X_{c}^{\circ}$ is open and dense in $X_{c}^{\circ}$; since $X_{c}^{\circ}$ is $T$-stable we have that $\mathcal{O} \cap X_{c}^{\circ}$ is a $T$-orbit in $X_{c}^{\circ}$ for the $T$-action $\bigdot$. Lemma~\ref{lemma:denseByAutomorphism} implies that $\mathcal{O} \cap X_{c}^{\circ}$ is an open, dense $T$-orbit for the $T$-action $\bigdot_{w_0(I)}$.

By Proposition~\ref{prop:TequivIso}, there is a $T$-equivariant isomorphism $\phi:
\tilde{O}\to X_{c}^{\circ}$. Let 
\[\Theta=\phi^{-1}({\mathcal O}\cap X_c^{\circ});\] 
this is an open, dense $T$-orbit in $\tilde{O}$ for the $T$-action $\bigdot$. Let $x \in \Theta$. By Lemma~\ref{lemma:orbitDynamics}(iii), the isotropy subgroup $(B_{L_I})_x$ is equal to the isotropy subgroup $T_x$. By \cite[Proposition 1.11]{B09}, for any variety $X$ equipped with the action of an algebraic group $H$, the orbit $H \cdot x$, $x \in X$, is a subvariety of $X$ of dimension $\dim H - \dim H_x$,
\begin{equation}
\label{eqn:orbitDim}
\dim(H \cdot x) = \dim H - \dim H_x.
\end{equation}
The above combine to imply that
\begin{equation}
\label{eqn:stabDim}
\dim(B_{L_I})_x = \dim T_x = \dim T - \dim(T \bigdot x) = \dim T - \dim \Theta = \dim T - \ell(c).
\end{equation}
We conclude, applying \eqref{eqn:orbitDim} and \eqref{eqn:stabDim}, that
\begin{align*}
\dim(B_{L_I} \cdot x) & = \dim B_{L_I} - \dim (B_{L_I})_x\\
&  = \ell(w_0(I)) + \dim T - (\dim T - \ell(c))\\
&  = \ell(w_0(I)) + \ell(c) \\
& = \ell(w_0(I)c),
\end{align*}
and thus there exists an dense $B_{L_I}$-orbit in $X_{w_0(I)c}$. Indeed, this dense orbit must also be open in its closure by~\cite[Proposition 1.8]{B91}. Hence, $X_{w_0(I)c}$ is $L_I$-spherical.

\noindent($\Rightarrow$) Suppose $w$ is not $I$-spherical. Then $w = w_0(I) d$ where $d$ is not a standard
Coxeter element. Moreover, by the hypothesis that $I\subseteq {\mathcal D}_L(w)$, this factorization 
is length additive. 

The Schubert variety $X_{d}$ is not a toric variety for the $\bigdot$ action of $T$~\cite{Karrup}. If $X_{d}^{\circ}$ contained an open, dense $T$-orbit, then $X_{d}$ would be a toric variety for $\bigdot$. Thus $X_{d}^{\circ}$ is not a toric variety for $\bigdot$. In general, a normal $G$-variety is spherical if and only if it has finitely many $B$-orbits
(see \cite[Theorem~2.1.2]{Perrin}). If $G=T$ then $B=T$ and hence there are infinitely many $T$-orbits in $X_{d}^{\circ}$ for $T$-action $\bigdot$; and for the $T$-action $\bigdot_{w_0(I)}$ by Lemma~\ref{lemma:denseByAutomorphism}. 

By Proposition~\ref{prop:TequivIso}, 
$\tilde{O}$
is $T$-equivariantly isomorphic as an affine variety to $X_{d}^{\circ}$. Thus, there are infinitely many $T$-orbits in $\tilde{O}$ for $T$-action $\bigdot$. Let $\mathcal{O}_1$ and $\mathcal{O}_2$ be two such orbits, and $x_1 \in \mathcal{O}_1, x_2 \in \mathcal{O}_2$. The fact that $x_1$ and $x_2$ reside in different orbits implies that there does not exist a $t \in T$ such that $tx_1 = x_2$. Thus Lemma~\ref{lemma:orbitDynamics}(iii) implies 
$B_{L_I} \bigdot x_1 \cap B_{L_I} \bigdot x_2 = \emptyset$. 
As these were an arbitrary pair among the infinite $T$-orbits, there must be infinitely many $B_{L_I}$ orbits in $X_{w_0(I)d}^{\circ}$ and hence in $X_{w_0(I)d}$. We conclude that $X_{w_0(I)d}$ is not $L_I$-spherical by
the same result \cite[Theorem~2.1.2]{Perrin} mentioned above.
 \qed

\section{Application to Demazure modules}\label{sec:4}

As an application of these results we give a sufficient condition for a Demazure module to be a multiplicity-free $L_I$-module; equivalently, a sufficient condition for a Demazure character to be multiplicity-free with respect to the basis of irreducible $L_I$-characters.

Let $\mathfrak{X}(T)$ denote the lattice of weights of $T$; our fixed Borel subgroup $B$ determines a subset of dominant integral weights $\mathfrak{X}(T)^+ \subset \mathfrak{X}(T)$.
The finite-dimensional irreducible $G$-representations are indexed by $\lambda \in \mathfrak{X}(T)^+$. Denoting the associated representation by $V_{\lambda}$, there is a class of $B$-submodules of $V_{\lambda}$, first introduced by Demazure \cite{Demazure}, that are indexed by $w \in W$. If $v_{\lambda}$ is a nonzero highest weight vector, then the \emph{Demazure module} $V_{\lambda}^{w}$ is the minimal $B$-submodule of $V_{\lambda}$ containing $wv_{\lambda}$.

There is a geometric construction of these Demazure modules. For $\lambda \in \mathfrak{X}(T)^+$, let $\mathfrak{L}_{\lambda}$ be the associated line bundle on $G / B$. For $w \in W$, we write $\mathfrak{L}_{\lambda}|_{X_w}$ for the restriction of $\mathfrak{L}_{\lambda}$ to the Schubert subvariety $X_w \subseteq G/ B$. Then the Demazure module $V_{\lambda}^{w}$ is isomorphic to the dual of the space of global sections of $\mathfrak{L}_{\lambda}|_{X_w}$, that is 
\[V_{\lambda}^{w} \cong H^0(X_w, \mathfrak{L}_{\lambda}|_{X_w})^*.\] 
This geometric perspective highlights the fact that $V_{\lambda}^{w}$ is not just a $B$-module, but is in fact also a $L_I$-module via the action induced on $H^0(X_w, \mathfrak{L}_{\lambda}|_{X_w})$ by the left multiplication action of $L_I$ on $X_w$.

As $L_I$ is a reductive group over characteristic zero, any $L_I$-module decomposes into a direct sum of irreducible $L_I$-modules. Let $\mathfrak{X}_{L_I}(T)^+$ be the set of dominant integral weights with respect to the choice of maximal torus and Borel subgroup $T \subseteq B_I \subseteq L_I$. For $\mu \in \mathfrak{X}_{L_I}(T)^+$, let $V_{L_I,\mu}$ be the associated irreducible $L_I$-module. If $M$ is a $L_I$-module and 
\[ \displaystyle M = \bigoplus_{\mu \in \mathfrak{X}_{L_I}(T)^+} V_{L_I,\mu}^{\oplus m_{L_I,\mu}} \]
is the decomposition into irreducible $L_I$-modules, then we say that $M$ is a \emph{multiplicity-free $L_I$-module} if $m_{L_I,\mu} \in \{ 0, 1 \}$. Similarly, if $\mathrm{char}(M)$ is the formal $T$-character of $M$ and \[ \displaystyle \mathrm{char}(M) = \sum_{\mu \in \mathfrak{X}_{L_I}(T)^+} m_{L_I,\mu} \mathrm{char}(V_{L_I,\mu}), \]
then we say that $\mathrm{char}(M)$ is \emph{$I$-multiplicity-free} if $m_{L_I,\mu} \in \{ 0, 1 \}$.

The following argument was given for type $A$ in \cite[Theorem~4.13(II)]{Hodges.Yong1}. We include the general
type argument (which is essentially the same) for sake of completeness:
\begin{theorem}
\label{theorem:demazureConsequences}
Let $w \in W$ with $I \subseteq D_L(w)$. Then $X_w$ is $L_I$-spherical if and only if for all $\lambda \in \mathfrak{X}(T)^+$, the Demazure module $V_{\lambda}^{w}$ is multiplicity-free $L_I$-module.
\end{theorem}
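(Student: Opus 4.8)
The plan is to establish a dictionary between the sphericity of $X_w$ (a geometric condition on $B_{L_I}$-orbits) and the multiplicity-freeness of $V_\lambda^w$ (a representation-theoretic condition), using the identification $V_\lambda^w \cong H^0(X_w, \mathfrak{L}_\lambda|_{X_w})^*$ together with the general principle, already cited in the paper via \cite{VK78, Hodges.Lakshmibai}, relating spherical varieties to multiplicity-free coordinate/section rings. Concretely, one wants both implications to flow from the statement that a normal $H$-variety is $H$-spherical precisely when its rings of sections decompose multiplicity-freely as $H$-modules.

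First I would set up the forward direction ($\Rightarrow$). Assume $X_w$ is $L_I$-spherical, so a Borel $B_{L_I}$ of $L_I$ has a dense open orbit. The key step is to invoke the Vinberg–Kimura / Servedio-type result that $B_{L_I}$ having a dense orbit forces each isotypic component of $H^0(X_w, \mathfrak{L}_\lambda|_{X_w})$ for the $L_I$-action to be at most one-dimensional as a space of $B_{L_I}$-highest-weight vectors. The standard argument here is that a section of given $L_I$-highest weight is determined up to scalar by its values on the dense $B_{L_I}$-orbit, since $B_{L_I}$-semiinvariant functions on a space with a dense $B_{L_I}$-orbit are determined by a single value; hence each weight $\mu$ occurs at most once, giving $m_{L_I,\mu}\in\{0,1\}$. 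Dualizing transfers this to $V_\lambda^w$, so $V_\lambda^w$ is multiplicity-free for every $\lambda\in\mathfrak{X}(T)^+$.

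For the reverse direction ($\Leftarrow$), I would argue contrapositively or directly via \cite[Proposition~4.0.1]{Hodges.Lakshmibai}, already quoted in the introduction: if the graded algebra $\bigoplus_\lambda H^0(X_w,\mathfrak{L}_\lambda|_{X_w})$ (equivalently the homogeneous coordinate ring of $X_w$ under a suitable projective embedding) is multiplicity-free as an $L_I$-module, then $X_w$ is $L_I$-spherical. Multiplicity-freeness of $V_\lambda^w$ for \emph{all} $\lambda$ is exactly the statement that each graded piece is multiplicity-free; since $X_w$ is normal (as recorded in the introduction via \cite{DeCon-Lak, RR}), the hypotheses of that proposition are met, and sphericity follows. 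The main subtlety is that the cited proposition concerns multiplicity-freeness of the full homogeneous coordinate ring for one ample line bundle, whereas the theorem quantifies over all dominant $\lambda$; I would reconcile these by noting that ranging over all $\lambda$ subsumes the single ample bundle needed, and that the $L_I$-action on sections is the one induced by left multiplication on $X_w$, matching the setup of \cite{Hodges.Lakshmibai}.

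The step I expect to be the main obstacle is making the passage between the dense-orbit condition and the one-dimensionality of highest-weight spaces fully rigorous in the projective (rather than affine) setting. In the affine case \cite{VK78} gives a clean equivalence, but Schubert varieties are projective, so one must work with the section algebra and be careful that the dense $B_{L_I}$-orbit on $X_w$ controls $B_{L_I}$-semiinvariant sections of each $\mathfrak{L}_\lambda|_{X_w}$; this is precisely where the normality of $X_w$ and the one-directional projective statement of \cite[Proposition~4.0.1]{Hodges.Lakshmibai} must be deployed, rather than the affine biconditional. Since the paper explicitly states this argument mirrors \cite[Theorem~4.13(II)]{Hodges.Yong1} and is included only for completeness, I would keep the exposition brief and lean on these two cited results to supply the two implications.
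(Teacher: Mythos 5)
Your argument is correct, but it routes around the paper's proof in a noticeable way, so a comparison is worthwhile. The paper proves both directions at once by citing the biconditional \cite[Theorem~2.1.2]{Perrin} --- a normal quasi-projective $H$-variety is $H$-spherical if and only if $H^0(X,\mathfrak{L})$ is multiplicity-free for \emph{every} $H$-linearized line bundle $\mathfrak{L}$ --- and then the entire remaining effort goes into showing that the $L_I$-linearized line bundles on $X_w$ are precisely the restrictions $\mathfrak{L}_{\lambda}|_{X_w}$, $\lambda\in\mathfrak{X}(T)^+$ (every line bundle on $X_w$ restricts from $G/B$, those with sections are the $\mathfrak{L}_\lambda$, and these are $G$-, hence $L_I$-, linearized). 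You instead treat the two directions asymmetrically: the forward direction by the elementary semiinvariant-ratio argument (two $B_{L_I}$-highest-weight sections of the same weight have a $B_{L_I}$-invariant rational ratio, constant on the dense orbit, hence constant --- this only needs irreducibility of $X_w$, not normality), and the reverse direction via \cite[Proposition~4.0.1]{Hodges.Lakshmibai} applied to the homogeneous coordinate ring of a single very ample $\mathfrak{L}_\lambda$. The payoff of your route is that you never need the classification of $L_I$-linearized line bundles on $X_w$, which is the only real work in the paper's proof; the cost is that your reverse direction leans on an external one-directional proposition and implicitly uses projective normality of $X_w$ in the embedding by $\mathfrak{L}_\lambda$ (so that the homogeneous coordinate ring is $\bigoplus_n H^0(X_w,\mathfrak{L}_{n\lambda}|_{X_w})$, whose graded pieces are the duals of the $V_{n\lambda}^{w}$) --- you should say this explicitly and cite \cite{RR} or \cite{DeCon-Lak}, and you should confirm that the multiplicity-freeness hypothesis in that proposition is imposed graded-piece by graded-piece, since an $L_I$-irreducible will typically recur across different degrees. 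With those two points pinned down, your proof is complete and is a legitimate alternative to the one in the paper.
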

\begin{proof}
Let $X$ be a quasi-projective, normal variety with the action of a complex, connected, reductive algebraic group $G$. Then $X$ is $G$-spherical if and only if the $G$-module $H^0(X,\mathfrak{L})$ is a multiplicity free $G$-module for all $G$-linearized line bundles $\mathfrak{L}$~\cite[Theorem 2.1.2]{Perrin}. 

All Schubert varieties $X_w \subseteq G / B$ are normal, quasi-projective varieties~\cite{Jos.Normal}. Further, as $L_I$ is reductive and we are in characteristic zero, $V_{\lambda}^{w}$ is a multiplicity-free $L_I$-module if and only if its dual space $(V_{\lambda}^{w})^* = H^0(X_w, \mathfrak{L}_{\lambda}|_{X_w})$ is a multiplicity-free $L_I$-module~\cite[\S 31.6]{Humphreys}. The above combines to imply our desired result once we show that the $L_I$-linearized line bundles on $X_w$ are precisely of the form $\mathfrak{L}_{\lambda}|_{X_w}$ for $\lambda \in \mathfrak{X}(T)^+$.

The line bundles, with non-trivial spaces of global sections, on $G/B$ are precisely $\mathfrak{L}_{\lambda}$, for $\lambda \in \mathfrak{X}(T)^+$;
these line bundles are all $G$-linearized~\cite[\S 1.4]{Brion.Lectures}. Every line bundle on $X_w$ is the restriction of a line bundle on $G/B$~\cite[Proposition 2.2.8]{Brion.Lectures}. We are done since the restriction $\mathfrak{L}_{\lambda}|_{X_w}$ of the $G$-linearized line bundle $\mathfrak{L}_{\lambda}$, for $\lambda \in \mathfrak{X}(T)^+$, is $L_I$-linearized. 
\end{proof}

\begin{corollary}
Let $w \in W$ be $I$-spherical for $I \subseteq D_L(w)$. For all $\lambda \in \mathfrak{X}(T)^+$, the Demazure module $V_{\lambda}^{w}$ is a multiplicity-free $L_I$-module.
\end{corollary}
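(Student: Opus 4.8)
The plan is to chain together the two principal results already established, since the corollary is exactly the representation-theoretic payoff of combining the geometric classification with the sphericality-versus-multiplicity-freeness dictionary. The hypothesis furnishes $w \in W$ that is $I$-spherical with $I \subseteq \mathcal{D}_L(w)$, and this combinatorial condition is precisely the right-hand side of the equivalence in Theorem~\ref{conj:main}. So the first step is to invoke Theorem~\ref{conj:main} to pass from the assumption that $w$ is $I$-spherical to the geometric conclusion that the Schubert variety $X_w$ is $L_I$-spherical.

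Next I would apply Theorem~\ref{theorem:demazureConsequences} in its forward direction. Its hypotheses, namely $w \in W$ with $I \subseteq \mathcal{D}_L(w)$, coincide with what we are given, and its equivalence asserts that $X_w$ is $L_I$-spherical if and only if the Demazure module $V_{\lambda}^{w}$ is a multiplicity-free $L_I$-module for every $\lambda \in \mathfrak{X}(T)^+$. Having secured $L_I$-sphericality of $X_w$ in the previous step, the desired multiplicity-freeness then holds uniformly across all dominant integral weights $\lambda$, which is the assertion of the corollary.

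I expect no genuine obstacle here: the statement is a formal consequence of Theorem~\ref{conj:main} and Theorem~\ref{theorem:demazureConsequences} applied in sequence. The only point that warrants attention is verifying that the hypotheses line up, and in particular that the condition $I \subseteq \mathcal{D}_L(w)$ required by both theorems is exactly the one assumed in the corollary; this is immediate. All of the substantive work, both the combinatorial Coxeter-element criterion and the identification of $L_I$-sphericality with multiplicity-free decomposition via global sections of line bundles $\mathfrak{L}_{\lambda}|_{X_w}$, has already been discharged in those two theorems, so the present argument is purely a matter of composition.
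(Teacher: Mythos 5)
Your proposal is correct and matches the paper's proof exactly: both apply Theorem~\ref{conj:main} to conclude that $X_w$ is $L_I$-spherical, and then apply Theorem~\ref{theorem:demazureConsequences} to deduce multiplicity-freeness of $V_{\lambda}^{w}$ for all $\lambda \in \mathfrak{X}(T)^+$. There is nothing to add.
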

\begin{proof}
By Theorem~\ref{conj:main}, if $w$ is $I$-spherical then $X_w$ is $L_I$-spherical. Therefore, by Theorem~\ref{theorem:demazureConsequences}, $V_{\lambda}^{w}$ is a multiplicity-free $L_I$-module for $\lambda \in \mathfrak{X}(T)^+$.
\end{proof}

\begin{corollary}
Let $w \in W$ be $I$-spherical for $I \subseteq D_L(w)$. For all $\lambda \in \mathfrak{X}(T)^+$, the Demazure character $\mathrm{char}(V_{\lambda}^{w})$ is $I$-multiplicity-free.
\end{corollary}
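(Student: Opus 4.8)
The plan is to deduce this corollary directly from the preceding one, which already establishes that $V_{\lambda}^{w}$ is a multiplicity-free $L_I$-module whenever $w$ is $I$-spherical. The only work remaining is to observe that the notion of a multiplicity-free module and the notion of an $I$-multiplicity-free character record the same integers $m_{L_I,\mu}$, so that the one condition transfers immediately to the other.

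First I would place the two decompositions side by side. By the previous corollary, under the hypothesis that $w$ is $I$-spherical we have a decomposition $V_{\lambda}^{w} = \bigoplus_{\mu} V_{L_I,\mu}^{\oplus m_{L_I,\mu}}$ with every $m_{L_I,\mu} \in \{0,1\}$. Taking formal $T$-characters of both sides and using additivity of $\mathrm{char}$ under direct sums yields $\mathrm{char}(V_{\lambda}^{w}) = \sum_{\mu} m_{L_I,\mu}\,\mathrm{char}(V_{L_I,\mu})$, with the very same coefficients $m_{L_I,\mu}$. This is precisely the expansion appearing in the definition of $I$-multiplicity-freeness. The point to emphasize is that these coefficients are unambiguously determined: since $L_I$ is reductive and we are in characteristic zero, the irreducible characters $\{\mathrm{char}(V_{L_I,\mu})\}_{\mu \in \mathfrak{X}_{L_I}(T)^+}$ are linearly independent, so the expansion of $\mathrm{char}(V_{\lambda}^{w})$ in this family is unique and its coefficients necessarily coincide with the module multiplicities. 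Hence each coefficient lies in $\{0,1\}$, which is exactly the assertion that $\mathrm{char}(V_{\lambda}^{w})$ is $I$-multiplicity-free.

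Since the argument is a purely formal translation between a module statement and a character statement, I do not anticipate any genuine obstacle. The only step requiring care is the invocation of linear independence of irreducible $L_I$-characters—equivalently, the semisimplicity of the category of $L_I$-modules in characteristic zero—which is what guarantees that the character decomposition is unique and therefore faithfully records the module multiplicities $m_{L_I,\mu}$ from the preceding corollary.
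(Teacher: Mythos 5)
Your proposal is correct and matches the paper's (implicit) reasoning: the paper states this corollary without proof precisely because it follows immediately from the preceding corollary once one notes that the definition of $I$-multiplicity-free is phrased in terms of the same coefficients $m_{L_I,\mu}$ appearing in the module decomposition. Your added remark about linear independence of the irreducible $L_I$-characters correctly justifies why those coefficients are well-defined.
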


These two corollaries appear non-trivial from a combinatorial perspective, even for a \emph{specific choice} of dominant weight $\lambda$ with fixed $w\in W$. The Demazure character can be recursively computed using Demazure operators. There is also a combinatorial rule for the character in terms of crystal bases (in instantiations such as the \emph{Littelmann path model} or the \emph{alcove walk model}); see, e.g., the textbook \cite{Bump}. However, an argument based on these methods eludes in general type, although we have an argument in type $A$ \cite{GHY}.

\section*{Acknowledgements} 
RH was partially supported by an AMS-Simons Travel Grant. AY is partially supported by a Simons Collaboration
Grant and an NSF RTG in Combinatorics.

\end{document}